\newcommand{\codomino}{\text{\rm co-domino}}
\newcommand{\F}{\mathcal{F}}
\newcommand{\Forb}{\operatorname{Forb}}
\newcommand{\fourpan}{\text{\rm 4-pan}}
\newcommand{\kttp}{K_{2,3}^+}
\newcommand{\kite}{\text{\rm kite}}
\newcommand{\stool}{\text{\rm stool}}
\renewcommand{\S}{\mathcal{S}}
\newtheorem{thm1}{Theorem}
\newtheorem{cor1}[thm1]{Corollary}
\newtheorem{question}{Question}
\newtheorem{thm}{Theorem}[section]
\newtheorem{lem}[thm]{Lemma}
\newtheorem{cor}[thm]{Corollary}
\newtheorem{obs}[thm]{Observation}
\theoremstyle{definition}
\title{Graphs with the strong Havel--Hakimi property}
\author{Michael D. Barrus$^{1}$ and Grant Molnar$^{2}$}
\begin{document}
\maketitle
\begin{abstract}
The Havel--Hakimi algorithm iteratively reduces the degree sequence of a graph to a list of zeroes. As shown by Favaron, Mah\'{e}o, and Sacl\'{e}, the number of zeroes produced, known as the residue, is a lower bound on the independence number of the graph. We say that a graph has the strong Havel--Hakimi property if in each of its induced subgraphs, deleting any vertex of maximum degree reduces the degree sequence in the same way that the Havel--Hakimi algorithm does. We characterize graphs having this property (which include all threshold and matrogenic graphs) in terms of minimal forbidden induced subgraphs. We further show that for these graphs the residue equals the independence number, and a natural greedy algorithm always produces a maximum independent set.
\end{abstract}
\footnotetext[1]{
Department of Mathematics, University of Rhode Island, Kingston, RI; {\tt barrus@uri.edu}.}
\footnotetext[2]{
Department of Mathematics, Brigham Young University, Provo, UT; {\tt molnar.grant.5772@gmail.com}.}

\section{Introduction}
\label{sec: intro}
Given the degree sequence $d$ of a simple graph, the \emph{Havel--Hakimi algorithm} is the procedure that iteratively removes a largest term $t$ from the sequence and subtracts 1 from the $t$ largest remaining terms, stopping when a list of zeroes is produced. For example, starting with the degree sequence $d = (3,2,2,2,2,1)$, the algorithm produces the sequences below.
\begin{align*}
d=d^0&: (3,2,2,2,2,1)\\
d^1&: (2,1,1,1,1)\\
d^2&: (1,1,0,0)\\
d^3&: (0,0,0)
\end{align*}

Here $d^i$ indicates the sequence obtained after $i$ iterations of the removal/reduction step. Throughout the paper, we will order the terms of each $d^i$ from largest to smallest.

This algorithm was introduced independently by Havel~\cite{Havel55} and Hakimi~\cite{Hakimi62} as a means for testing whether a list of nonnegative integers $d$ is the degree sequence of a graph; $d$ is a degree sequence if and only if the algorithm terminates at a list of zeroes. For a degree sequence $d$, the \emph{residue} of $d$, denoted $R(d)$, is the number of zeroes that remain. For example, the steps above show that $R((3,2,2,2,2,1)) = 3$. For any graph $G$ having degree sequence $d$, we define $R(G)=R(d)$ and speak interchangeably of the residue of the graph and the residue of the degree sequence.

Let us say that a vertex $v$ in a graph $G$ has \emph{the Havel--Hakimi property} if $v$ is a vertex of maximum degree in $G$ having neighbors with as large of degrees as possible; more precisely, $v$ has the Havel--Hakimi property if it has maximum degree in $G$ and no vertex adjacent to $v$ has degree smaller than any vertex not adjacent to $v$. Each reduction from $d^i$ to $d^{i+1}$ in the Havel--Hakimi algorithm simulates the effect on the degree sequence of deleting a vertex with the Havel--Hakimi property.

Not every realization of a degree sequence has such a vertex. For example, there is no vertex with the Havel--Hakimi property in the realization of $(3,2,2,2,2,1)$ obtained by attaching a pendant vertex to a vertex of a chordless 5-cycle. However, as shown in most proofs of the result of Havel and Hakimi (see, for instance,~\cite[Theorem 1.12]{ChartrandLesniakZhang11}), given any degree sequence $d$ and any vertex $v$ chosen to have maximum degree, there is a realization of $d$ where $v$ has the Havel--Hakimi property.

We say that a graph $G$ has the \emph{strong Havel--Hakimi property} if in every induced subgraph $H$ of $G$, \emph{every} vertex of maximum degree has the Havel--Hakimi property. Let $\mathcal{S}$ denote the class of graphs with this property. In this paper we study $\S$, characterizing its elements and describing some properties of these graphs.

First, we characterize $\S$ in terms of its minimal forbidden induced subgraphs. Given a class $\F$ of graphs, let $\Forb(\F)$ denote the class of all graphs having no induced subgraph isomorphic to an element of $\F$. Before presenting the set $\F$ for which $\Forb(\F)=\S$, we need some definitions.

Given graphs $G$ and $H$, let $G+H$ denote the disjoint union of $G$ and $H$, and express the disjoint union of $m$ copies of $G$ by $mG$. Given positive integers $m$ and $n$, let $P_n$ denote the path with $n$ vertices, let $K_n$ be the complete graph on $n$ vertices, and let $K_{m,n}$ be the complete bipartite graph with partite sets of sizes $m$ and $n$. Let $\kttp$ be the graph obtained by adding an edge to the larger partite set of $K_{2,3}$; this graph is the complement of $K_2+P_3$. We define the \emph{4-pan}, \emph{kite}, \emph{stool}, and \emph{co-domino}, respectively, to be the connected graphs shown in Figure~\ref{fig: S forb graphs}.
\begin{figure}
\centering
\includegraphics{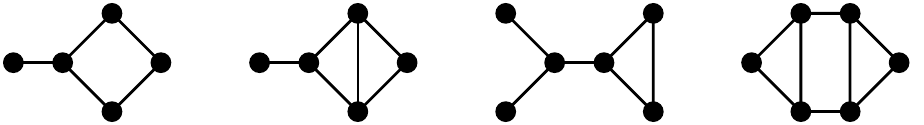}
\caption{The 4-pan, kite, stool, and co-domino graphs.}
\label{fig: S forb graphs}
\end{figure}

\begin{thm1}\label{thm: Forb for S}
$\S= \Forb(\{P_5,\fourpan,K_{2,3},\kttp,\kite,2P_3,P_3+K_3,\stool,\codomino\})$.
\end{thm1}

We prove this theorem in Section~\ref{sec: forb for S}. Along the way, we see how $\S$ compares with some other well known graph families. (A discussion of these families accompanies the proof.)

\begin{cor1} \label{cor: S and classes}
The class $\S$ contains all matrogenic graphs (and hence all matroidal and threshold graphs as well).
\end{cor1}

Further motivation for studying $\S$ comes from the connection between the Havel--Hakimi residue and the independence number of a graph. It follows from the definition of the strong Havel--Hakimi property that when we iteratively delete vertices of maximum degree from a graph $G$ in $\S$, we arrive at an edgeless induced subgraph of $G$ with $R(G)$ vertices. Thus $R(G) \leq \alpha(G)$ for graphs $G$ in $\S$, where $\alpha(G)$ denotes the independence number of $G$. Favaron et al.~\cite{FavaronEtAl91} showed that this inequality is in fact true for all graphs.

\begin{thm1}[\cite{FavaronEtAl91}; see also \cite{GriggsKleitman90,Triesch96}]
\label{thm: R leq alpha}
For every graph $G$, we have $R(G) \leq \alpha(G)$.
\end{thm1}

Given that the residue of a graph $G$ may be computed in $O(e)$ steps, where $e$ is the number of edges in the graph, while determining $\alpha(G)$ is NP-hard, we may wish to know how well $R(G)$ approximates $\alpha(G)$. In particular, it is an open problem to find a nontrivial answer to the following question.

\begin{question}\label{question}
\text{For which graphs are the residue and independence number equal?}
\end{question}

In considering when the residue of a graph equals its independence number, it seems natural to consider procedures that generate independent sets by removing high-degree vertices. The simplest such heuristic, which has been called Maxine~\cite{GriggsKleitman90}, begins with a graph $G$ and, as long as the current graph contains any edges, iteratively chooses and deletes a vertex of maximum degree. The vertices that remain at the completion of the process necessarily form an independent set in $G$, and the size $M$ of this set is clearly a lower bound on $\alpha(G)$. We note that different choices of a vertex of maximum degree can result in different sizes of independent sets; performing Maxine on the 5-vertex path can yield an independent set of size $M$ equal to either 2 (which equals the residue) or 3 (which equals the independence number), depending on which vertex is deleted first. Still, Griggs and Kleitman proved the following:

\begin{thm1}[\cite{GriggsKleitman90}] If $M$ is the size of the independent set produced by any application of the Maxine heuristic, then $R(G)\leq M \leq \alpha(G)$.
\end{thm1}

Thus if the residue equals the independence number, the residue must equals the size of the independent set generated by any application of Maxine. As we now see, this is true for all graphs in $\S$.

\begin{thm1}\label{thm: R equals alpha for S}
If a graph $G$ is in $\S$, then $R(G)=\alpha(G)$. Hence the Maxine heuristic always produces a maximum independent set for graphs in $\S$.
\end{thm1}

A proof of this theorem is given in Section~\ref{sec: S residues}.

Throughout the paper we use $\vee$ to denote the join of graphs, and for $n \geq 1$, we use $C_n$ to denote the cycle with $n$ vertices. For a collection $J$ of vertices in $G$, we use $G[J]$ to denote the induced subgraph of $G$ with vertex set $J$, and we use $G-J$ to indicate the graph obtained by deleting all vertices in $J$ from $G$.

\section{Proofs of Theorem~\ref{thm: Forb for S} and Corollary~\ref{cor: S and classes}}
\label{sec: forb for S}

This section contains a proof of Theorem~\ref{thm: Forb for S}, showing that the set \[\F_\S=\{P_5,\fourpan,K_{2,3},\kttp,\kite,2P_3,P_3+K_3,\stool,\codomino\};\] contains all minimal forbidden induced subgraphs for the class $\S$. As a byproduct, partway through the section we obtain a proof of Corollary~\ref{cor: S and classes}, showing that $\S$  contains all matrogenic graphs. We begin by showing that $\S \subseteq \Forb(\F_\S)$.

\begin{lem}\label{lem: S is in Forb(F)}
Every graph in $\S$ is an element of $\Forb(\F_{\S})$.
\end{lem}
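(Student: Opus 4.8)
The class $\S$ is hereditary: the strong Havel--Hakimi property is a condition quantified over \emph{all} induced subgraphs, so if $G \in \S$ and $H$ is an induced subgraph of $G$, then every induced subgraph of $H$ is also an induced subgraph of $G$, and hence every maximum-degree vertex of every induced subgraph of $H$ has the Havel--Hakimi property, so $H \in \S$. Because of this, the inclusion $\S \subseteq \Forb(\F_\S)$ reduces to checking that \emph{no} graph in $\F_\S$ lies in $\S$: if some $G \in \S$ contained an induced subgraph $H \cong F$ for a member $F$ of $\F_\S$, then $F \cong H \in \S$ by heredity, a contradiction. So the plan is to run through the nine graphs of $\F_\S$ one at a time.

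For each $F \in \F_\S$ I would exhibit a vertex $v$ of maximum degree in $F$ that fails the Havel--Hakimi property --- that is, a neighbor $u$ of $v$ and a non-neighbor $w$ of $v$ with $\deg(u) < \deg(w)$ (all degrees computed in $F$; for the four graphs in Figure~\ref{fig: S forb graphs} they are evident from the picture). In each case the violation is witnessed by $F$ itself, so no proper induced subgraph is needed. For instance: in $P_5 = v_1 v_2 v_3 v_4 v_5$ the vertex $v_2$ has maximum degree, yet its neighbor $v_1$ has degree $1$ while its non-neighbor $v_4$ has degree $2$; in $K_{2,3}$ with partite sets $\{a,b\}$ and $\{c,d,e\}$ the vertex $a$ has maximum degree, yet its neighbor $c$ has degree $2$ while its non-neighbor $b$ has degree $3$; in the $\fourpan$ the vertex of degree $3$ has the pendant vertex (degree $1$) as a neighbor and the antipodal vertex of the $4$-cycle (degree $2$) as a non-neighbor; in $2P_3$ and in $P_3+K_3$ the central vertex of a $P_3$-component has two degree-$1$ neighbors but a degree-$2$ non-neighbor in the other component; and analogous witnesses exist in $\kttp$, the $\kite$, the $\stool$, and the $\codomino$.

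I do not expect a real obstacle here: the content is a short, finite verification. The one subtlety worth flagging is that the failing vertex must be chosen with a little care, because it is typically \emph{not} every maximum-degree vertex of $F$ that violates the property. For example, in $P_3+K_3$ a degree-$2$ vertex of the triangle does satisfy the Havel--Hakimi property --- both of its neighbors have degree $2$, which is no smaller than the degree of any non-neighbor --- so one must instead use the central vertex of the $P_3$; similarly, in $P_5$ the witness is $v_2$ (or $v_4$) rather than the middle vertex $v_3$, and in $\kttp$ it is a vertex of the two-element partite class rather than an endpoint of the added edge. Once a suitable maximum-degree vertex has been identified in each of the nine graphs, the failure of the Havel--Hakimi property is immediate.
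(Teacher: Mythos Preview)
Your proposal is correct and takes essentially the same approach as the paper: both arguments observe that each graph $F \in \F_\S$ contains a maximum-degree vertex with a low-degree neighbor and a higher-degree non-neighbor, so $F$ cannot occur as an induced subgraph of any member of $\S$. The paper compresses this into a two-sentence observation without working out the individual cases, whereas you spell out several of the witnesses and explicitly route through heredity of $\S$; your additional remark that not every maximum-degree vertex of $F$ fails the property (e.g.\ the triangle vertices in $P_3+K_3$, or $v_3$ in $P_5$) is a nice clarification the paper omits.
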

\begin{proof}
Observe that in each graph in $\F_{\S}$, some vertex of maximum degree has a neighbor $w$ and a non-neighbor $u$ such that the degree of $w$ is less than the degree of $u$. By definition, such vertices cannot occur in any induced subgraph of an element of $\S$; thus graphs in $\S$ must be $\F_{\S}$-free.
\end{proof}

We now show that $\Forb(\F_{\S}) \subseteq \S$. Suppose to the contrary that $\Forb(\F_{\S})$ contains an element $G$ that is not in $\S$. By definition, some induced subgraph $G'$ of $G$ contains a vertex of maximum degree not having the Havel--Hakimi property. Since $\Forb(\F_{\S})$ is a hereditary class and by definition $G' \notin \S$, it suffices to assume that $G'=G$. Some vertex $v$ of maximum degree in $G$ has a neighbor $w$ and a nonneighbor $u$ such that $d_G(w)<d_G(u)$. Because $v$ is a neighbor of $w$ that $u$ does not share, $u$ must be adjacent to two vertices to which $w$ is not. We illustrate the adjacency relationships among, $u$, $v$, $w$, and these other vertices in Figure~\ref{fig: matro forb config}, where dotted segments indicate non-adjacencies.
\begin{figure}
\centering
\includegraphics{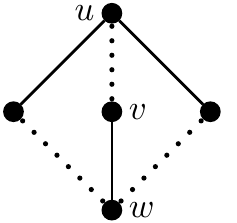}
\caption{A configuration present when a vertex of maximum degree lacks the Havel--Hakimi property.}
\label{fig: matro forb config}
\end{figure}

As a brief aside, we remark that the configuration illustrated in Figure~\ref{fig: matro forb config} is precisely the forbidden configuration that characterizes the matrogenic graphs, as shown by F\"{o}ldes and Hammer in~\cite{FoldesHammer78}. The matrogenic graphs were introduced in~\cite{FoldesHammer78} as those graphs for which the vertex sets of a given 4-vertex configuration are the circuits of a matroid with ground set $V(G)$. The matrogenic graphs are known to properly contain such graph classes as the matroidal graphs and the threshold graphs (see~\cite{MahadevPeled95} for a survey on all of these graph classes). Note that the forbidden configuration characterization of matrogenic graphs implies that the class is closed under taking induced subgraphs. We thus conclude that in every matrogenic graph each vertex of maximum degree has the Havel-Hakimi property. Thus $\S$ contains the class of matrogenic graphs, as stated in Corollary~\ref{cor: S and classes}.

Continuing with our proof of Theorem~\ref{thm: Forb for S}, we use the assumption that $G$ is $\F_{\S}$-free to arrive at a contradiction in every case.

\bigskip
\noindent \textsc{Claim:} $u$ and $v$ have a common neighbor that $w$ does not.

\medskip
Assume to the contrary that every common neighbor of $u$ and $v$ is adjacent to $w$. Let $x$ and $y$ be vertices adjacent to $u$ and not to $w$. By assumption, neither $x$ nor $y$ is adjacent to $v$. Now since $d_G(v) \geq d_G(u)>d_G(w)$, $v$ must have some neighbor $z$ that is not adjacent to $w$. Note that $z$ is distinct from $u$, $w$, $x$, and $y$. By assumption, $z$ is not adjacent to $u$.

Suppose that $x$ is not adjacent to $y$. Since $G[\{u,v,x,y,z\}]$ is not isomorphic to $P_5$ or the 4-pan, $z$ is adjacent to neither of $x$ or $y$. However, this would imply that $G[\{u,v,w,x,y,z\}]$ is isomorphic to $2P_3$ or that $G[\{u,v,w,x,z\}]$ is isomorphic to $P_5$, depending on whether $u$ is adjacent to $w$.

Suppose instead that $x$ is adjacent to $y$. Now since $G[\{u,v,x,y,z\}]$ is not isomorphic to the kite graph, $z$ is nonadjacent to at least one of $x$ and $y$; by symmetry, assume $zx$ is not an edge of $G$. If $uw$ is an edge, then $G[\{u,v,w,x,z\}]$ is isomorphic to $P_5$, a contradiction, so $u$ is not adjacent to $w$. If $yz$ is an edge, then $G[\{v,w,x,y,z\}]$ is isomorphic to $P_5$, again a contradiction, so $y$ is not adjacent to $z$. However, now we see that $G[\{u,v,w,x,y,z\}]$ is isomorphic to the $P_3+K_3$, a contradiction. Having arrived at a contradiction in every case, our proof of the claim is complete.

\bigskip
We now establish a lemma that holds for all $\F_{\S}$-free graphs.

\bigskip
\noindent \textsc{Claim:} Consider an $\F_{\S}$-free graph $H$ in which the subgraph induced on vertices $a,b,c,d$ contains edges $ab$, $bc$, $cd$, and non-edges $ac$ and $bd$. If some vertex $p$ is adjacent to $a$ but not to $d$, then $p$ is adjacent to $b$ and not to $c$.

\medskip
Assume that $p$ is adjacent to $a$ but not to $d$. If $p$ is not adjacent to $b$, then $H[\{a,b,c,d,p\}]$ is isomorphic to $P_5$, $\fourpan$, or $K_{2,3}$. If $p$ is adjacent to both $b$ and $c$, then $H[\{a,b,c,d,p\}]$ is isomorphic to either $\kite$ or $\kttp$, which proves the claim.

\bigskip
By the first claim above, $G$ has a vertex $x$ that is adjacent to $u$ and $v$ and not to $w$. Let
\begin{align*}
N_u &= \{p \in V(G)-\{u,v,w,x\}: pu \in E(G), pw \notin E(G)\};\\
N_v &= \{p \in V(G)-\{u,v,w,x\}: pv \in E(G), px \notin E(G)\};\\
N_w &= \{p \in V(G)-\{u,v,w,x\}: pw \in E(G), pu \notin E(G)\};\\
N_x &= \{p \in V(G)-\{u,v,w,x\}: px \in E(G), pv \notin E(G)\}.
\end{align*}

Since $G$ is $\F_{\S}$-free, we may apply the second claim twice with $\{a,b,c,d\} =\{u,v,w,x\}$ to conclude that $N_w \subseteq N_v$ and $N_u \subseteq N_x$.

\bigskip
\noindent \textsc{Claim:} If $y \in N_u$ and $z \in N_v$, then $G[\{u,v,w,x,y,z\}]$ is one of the graphs shown in Figure~\ref{fig: dumbbell}.
\begin{figure}
\centering
\includegraphics{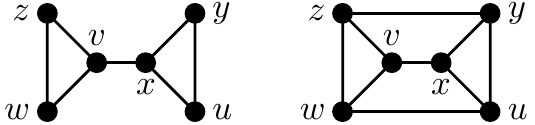}
\caption{The graphs from Lemma~\ref{lem: dumbbell}.}
\label{fig: dumbbell}
\end{figure}
\medskip
Since $N_u \subseteq N_x$, vertex $y$ is adjacent to $x$ and not adjacent to $v$.

Suppose first that $u$ is adjacent to neither $w$ nor $z$. If $y$ is adjacent to $z$, then since $G[\{v,w,x,y,z\}]$ is not isomorphic to $\fourpan$, we have that $wz$ is an edge; however, then $G[\{u,v,w,x,y,z\}]$ is isomorphic to $\codomino$, a contradiction. Thus $yz$ is not an edge. Now since $G[\{u,v,w,x,y,z\}]$ is not isomorphic to the stool graph, $wz$ is an edge, and $G[\{u,v,w,x,y,z\}]$ is equal to the first graph shown in Figure~\ref{fig: dumbbell}.

Suppose instead that $u$ has a neighbor in $\{w,z\}$. Without loss of generality assume $uw$ is an edge. Since $G[\{u,v,w,x,z\}]$ is isomorphic to none of $\fourpan, K_{2,3},\kttp$, we see that $uz$ is not an edge and $wz$ is an edge in $G$. Now since $G[\{u,v,w,x,y,z\}]$ is not isomorphic to $\codomino$, $yz$ must be an edge, and $G[\{u,v,w,x,y,z\}]$ is the second graph shown in Figure~\ref{fig: dumbbell}. Thus the claim is true.

\bigskip
Inspecting the neighbors of $z$ in both graphs in Figure~\ref{fig: dumbbell}, we conclude that if $N_u$ is nonempty, then $N_v = N_w$. Now since $d_G(u)>d_G(w)$, certainly $N_u$ is nonempty. However, recalling our assumptions on the degrees of $u$, $v$, and $w$, and that $N_u \subseteq N_x$, we have \[|N_w| < |N_u| \leq |N_x| \leq |N_v| = |N_w|.\] This  contradiction implies that $\Forb(\F_\S) \subseteq \S$, which completes our proof of Theorem~\ref{thm: Forb for S}.

\section{Proof of Theorem~\ref{thm: R equals alpha for S}}
\label{sec: S residues}

In this section we prove Theorem~\ref{thm: R equals alpha for S} from the Introduction. We preface our main argument with results on the Maxine heuristic. First is the observation made there about Maxine and the residue, reproduced here for easy reference.

\begin{obs}\label{obs: R=M}
If $G$ is a graph and at each step of an application of the Maxine heuristic to $G$ the maximum-degree vertex to be deleted has the Havel--Hakimi property, then the independent set returned by Maxine has size $R(G)$. Consequently, this conclusion holds for all graphs in $\S$.
\end{obs}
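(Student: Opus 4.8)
The plan is to follow, step by step, the effect of an application of Maxine on the degree sequence and to show by induction that each vertex deletion mirrors exactly one reduction step of the Havel--Hakimi algorithm. Write $G = G_0, G_1, G_2, \dots$ for the sequence of graphs arising during a run of Maxine, where $G_i$ is obtained from $G_{i-1}$ by deleting a vertex $v_i$ of maximum degree, and assume that each $v_i$ has the Havel--Hakimi property in $G_{i-1}$. Let $d = d^0, d^1, d^2, \dots$ be the lists produced by the Havel--Hakimi algorithm applied to the degree sequence $d$ of $G$, and let $m$ be the number of reduction steps it performs; thus $d^m$ is a list of $R(G)$ zeroes, while $d^j$ has a positive term for every $j$ with $0 \le j < m$ (otherwise the algorithm would have halted earlier).

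First I would establish the invariant that, for each $i$ with $0 \le i \le m$, the graph $G_i$ occurs in the Maxine run and has degree sequence $d^i$. The case $i = 0$ is immediate. Assume the claim for $i-1$, where $1 \le i \le m$. Since $i-1 < m$, the list $d^{i-1}$ has a positive term, so $G_{i-1}$ has an edge and Maxine deletes some maximum-degree vertex $v_i$, which by hypothesis has the Havel--Hakimi property; let $t$ be its degree. Deleting $v_i$ removes the term $t$ from the degree list of $G_{i-1}$ and decreases by $1$ the degree of each of the $t$ neighbors of $v_i$. Because $v_i$ has the Havel--Hakimi property, no neighbor of $v_i$ has degree smaller than any non-neighbor, so the degrees of the neighbors of $v_i$ constitute a legitimate choice of ``the $t$ largest remaining terms''; ties among vertices of equal degree cause no difficulty, since they do not affect the resulting sorted list. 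Hence the sorted degree sequence of $G_i$ is precisely $d^i$, and the induction is complete.

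The conclusion now follows at once. For each $i$ with $0 \le i < m$ the list $d^i$ has a positive term, so $G_i$ has an edge and Maxine continues; and $G_m$ has degree sequence $d^m = (0, \dots, 0)$, so $G_m$ is edgeless and Maxine halts after exactly $m$ deletions, returning the independent set $V(G_m)$, of size $|V(G)| - m = R(G)$. Finally, if $G \in \S$, then every $G_i$ is an induced subgraph of $G$, so by the definition of $\S$ each maximum-degree vertex of each $G_i$ has the Havel--Hakimi property; thus the hypothesis of the observation holds for every run of Maxine on $G$, and each such run returns an independent set of size $R(G)$.

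The argument presents essentially no obstacle: the only point requiring a little care is the assertion that deleting a vertex with the Havel--Hakimi property reproduces a single Havel--Hakimi reduction step even when degree ties are present, but this is exactly the fact already recorded in Section~\ref{sec: intro} and lies at the heart of the standard proof of the Havel--Hakimi theorem, so its verification is routine.
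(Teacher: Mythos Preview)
Your proof is correct and follows exactly the reasoning the paper intends: the paper states this as an observation without proof, relying on the remark in Section~\ref{sec: intro} that each Havel--Hakimi reduction ``simulates the effect on the degree sequence of deleting a vertex with the Havel--Hakimi property,'' and your induction simply makes this explicit. There is nothing to correct.
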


\begin{lem}\label{lem: P5 or C4}
Let $G$ be a graph having at least one edge. If a vertex $v$ of maximum degree in $G$ belongs to every maximum independent set in $G$, then $v$ either belongs to an induced 4-cycle in $G$, or $v$ is the center vertex of an induced $P_5$.
\end{lem}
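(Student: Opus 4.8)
The plan is to translate the hypothesis into a statement about a maximal independent set of $G-v$, and then read off the induced $C_4$ or $P_5$ from the way the neighbors of $v$ attach to that set.

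First I would record two preliminary facts. Since $v$ lies in every maximum independent set, $G-v$ has no independent set of size $\alpha(G)$, so $\alpha(G-v)=\alpha(G)-1$; fixing a maximum independent set $S$ of $G$ with $v\in S$, the set $S'=S-\{v\}$ is therefore a maximum --- hence inclusionwise maximal --- independent set of $G-v$. Consequently $S'$ is disjoint from $N[v]$, and every vertex of $N(v)$ has a neighbor in $S'$. I would also note $d_G(v)\ge 2$: the graph has an edge, so $d_G(v)\ge 1$, and if $d_G(v)=1$ then the unique neighbor of $v$ is isolated in $G-v$ and hence forced into the maximal independent set $S'$, contradicting $S'\cap N(v)=\emptyset$.

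Next I would show $N(v)$ is not a clique; this is the only place the maximum-degree hypothesis is used, and it is the step I expect to require the most care to phrase cleanly (and indeed the statement fails without that hypothesis). If $N(v)$ were a clique, then $Q:=\{v\}\cup N(v)$ would be a clique on $d_G(v)+1$ vertices, so each $w\in N(v)$ would already have $d_G(v)$ neighbors inside $Q$ --- namely $v$ together with the other $d_G(v)-1$ vertices of $N(v)$ --- and since $d_G(v)$ is the maximum degree, $w$ would have \emph{all} of its neighbors in $Q$. This contradicts the fact that $w$ has a neighbor in $S'$, a set disjoint from $Q$.

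Finally, fix non-adjacent $w,w'\in N(v)$, and choose $a\in N(w)\cap S'$ and $b\in N(w')\cap S'$; both lie outside $N[v]$, and if $a\neq b$ then $a\not\sim b$ because $S'$ is independent. A short configuration check then finishes the proof. If $a=b$, or if $a\neq b$ with $a\sim w'$, or if $a\neq b$ with $b\sim w$, then $v$ together with $w$, $w'$, and one of $a,b$ induces a $C_4$ through $v$. In the remaining case $a\neq b$, $a\not\sim w'$, and $b\not\sim w$, the vertices $a,w,v,w',b$ induce a $P_5$ with center $v$. One only needs to check that the relevant vertices are distinct, which is immediate from $S'\cap N[v]=\emptyset$ and $w\neq w'$. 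The real content is the reformulation in the first paragraph; once that is in hand, the clique dichotomy and the case analysis are routine.
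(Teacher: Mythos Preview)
Your proof is correct and follows essentially the same approach as the paper's: both set $A=S'=S\setminus\{v\}$, observe that every neighbor of $v$ must have a neighbor in this set, use the maximum-degree hypothesis to rule out $N(v)$ being a clique, and then finish with the same common-neighbor case split to produce either an induced $C_4$ or an induced $P_5$ centered at $v$. Your presentation differs only cosmetically (you phrase the key observation via maximality of $S'$ in $G-v$ rather than directly, and your separate $d_G(v)\ge 2$ argument is in fact subsumed by your clique argument, since a singleton $N(v)$ is a clique).
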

\begin{proof}
Suppose that $v$ is a vertex of maximum degree vertex in $G$ and that $v$ belongs to every maximum independent set in $G$. Let $A$ be a set obtained by deleting $v$ from a maximum independent set in $G$, and let $B = V(G)-\{v\}-A$. By definition, $|A|=\alpha(G)-1$, and the neighbors of $v$ all belong to $B$. Furthermore, since $v$ belongs to every maximum independent set, every vertex of $B$ has to have at least one neighbor in $A$. It follows that some vertex of $B$ has degree at least 2, and hence that $v$ does as well.

If the neighbors of $v$ are all pairwise adjacent, then each has a degree larger than that of $v$, a contradiction. Thus $v$ has two neighbors $w$ and $x$ that are nonadjacent. If $w$ and $x$ have a common neighbor in $A$, then this vertex and $v,w,x$ induce a 4-cycle in $G$. If $w$ and $x$ have no common neighbor, then, since each has a neighbor in $A$, these neighbors and $v,w,x$ induce a copy of $P_5$ in $G$ with $v$ as the center vertex.
\end{proof}

\begin{cor}\label{cor: C4 P5 free}
The Maxine heuristic always produces a maximum independent set when applied to a $\{C_4,P_5\}$-free graph.
\end{cor}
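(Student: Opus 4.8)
The plan is to leverage Lemma~\ref{lem: P5 or C4} to show that every vertex deletion performed by Maxine leaves the independence number unchanged, so that the edgeless graph at which the process halts still has independence number $\alpha(G)$. First I would record the structural stability we need: every graph obtained during a run of Maxine on $G$ is an induced subgraph of $G$, and since being $\{C_4,P_5\}$-free is a hereditary property, each such graph is itself $\{C_4,P_5\}$-free.

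Next I would establish the key single-step claim: if $H$ is a $\{C_4,P_5\}$-free graph with at least one edge and $v$ is a vertex of maximum degree in $H$, then $\alpha(H-v)=\alpha(H)$. To see this, observe that Lemma~\ref{lem: P5 or C4} applies to $H$ (it has an edge, and $v$ has maximum degree), but its conclusion cannot hold, since $H$ contains no induced $C_4$ and no induced $P_5$. Hence the hypothesis of the lemma must fail for $v$, that is, $v$ does not belong to every maximum independent set of $H$. Choosing a maximum independent set $I$ of $H$ with $v\notin I$, we see that $I$ remains independent in $H-v$, so $\alpha(H-v)\ge|I|=\alpha(H)$; the reverse inequality is immediate because deleting a vertex cannot increase the independence number.

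Finally I would run the iteration. Let $G=G_0,G_1,\dots,G_k$ be the sequence of graphs produced by a run of Maxine, where $G_{i+1}=G_i-v_i$ for a maximum-degree vertex $v_i$ of $G_i$, and $G_k$ is the edgeless graph at which the process terminates. Applying the single-step claim to each $G_i$ with $0\le i<k$ yields $\alpha(G_i)=\alpha(G_{i+1})$, and hence $\alpha(G)=\alpha(G_k)$. Since $G_k$ has no edges, its vertex set---which is exactly the independent set returned by Maxine---has size $\alpha(G_k)=\alpha(G)$, so this set is a maximum independent set of $G$. The deduction is essentially immediate from Lemma~\ref{lem: P5 or C4}, so there is no substantive obstacle here; the only points needing attention are checking that the lemma's hypotheses (maximum degree of $v_i$, and the presence of an edge in $G_i$) are met at each step, which they are by the design of Maxine, and disposing of the trivial case in which $G$ is already edgeless.
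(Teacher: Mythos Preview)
Your proof is correct and follows essentially the same approach as the paper: use the contrapositive of Lemma~\ref{lem: P5 or C4} to see that deleting a maximum-degree vertex from a $\{C_4,P_5\}$-free graph preserves the independence number, then iterate using heredity. The paper's proof is more terse, but the logical content is identical.
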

\begin{proof}
It follows from Lemma~\ref{lem: P5 or C4} that deleting a vertex of maximum degree in a $\{C_4,P_5\}$-free graph $G$ cannot reduce the independence number of $G$. Inductively applying this result (since the resulting subgraphs are also $\{C_4,P_5\}$-free), we conclude that the independent set resulting from the Maxine algorithm has size $\alpha(G)$.
\end{proof}

We now complete our proof of Theorem~\ref{thm: R equals alpha for S}. In the following, let $G$ be an element of $\S$. We recall from Theorem~\ref{thm: Forb for S} that $G$ is $\{P_5, \fourpan, K_{2,3}, \kttp, \kite, 2P_3, P_3+K_3, \stool, \codomino\}$-free, and we will refer to these forbidden subgraphs often in what follows.

We proceed by induction on the number of vertices in $G$. One easily verifies that $R(G)=\alpha(G)$ for all graphs on four or fewer vertices, and we suppose that $G$ has $n$ vertices and that $R(H)=\alpha(H)$ for all graphs $H$ in $\S$ with fewer than $n$ vertices.

Let $v$ be a vertex of maximum degree in $G$. Since $G$ belongs to $\S$, the vertex $v$ has the Havel--Hakimi property, and $G-v$ belongs to $\S$. Using the induction hypothesis, we have $R(G)=R(G-v)=\alpha(G-v)$. If $\alpha(G-v) = \alpha(G)$ our proof is complete, so suppose henceforward that $v$ belongs to every maximum independent set in $G$. Since $G$ is $P_5$-free, Lemma~\ref{lem: P5 or C4} implies that $v$ belongs to an induced $4$-cycle in $G$. Let $W$ be the vertex set of this cycle, and let $U$ be the set of all vertices of $G$ that are adjacent to every vertex of $W$.

Since $G$ is $\{K_{2,3},\kttp\}$-free, $G[U]$ is $\{3K_1,K_2+K_1\}$-free. It is an easy exercise to show that a graph is complete multipartite if and only if it is $\{K_2+K_1\}$-free. Thus $G[U]$, and by extension $G[U \cup W]$ is a complete multipartite graph where the maximum size of a partite set is $2$. Let $B$ be the union of all partite sets of size $1$, and let $C$ be the union of all partite sets of size $2$ (so $W \subseteq C$). Finally, let $A$ be the subset of $V(G)-B-C$ consisting of all vertices not having any neighbors in $C$, and let $D = V(G)-A-B-C$.

Any four vertices comprising two partite sets in $G[C]$ induce $C_4$, and since $G$ is $\{\fourpan,K_{2,3},\kttp\}$-free, each vertex in $D$ is adjacent to either no vertex, all vertices, or exactly two consecutive vertices of such a 4-cycle. By definition each vertex in $D$ has both a neighbor and a nonneighbor in $W$; it follows that vertices in $D$ each have exactly one neighbor in each partite set of $G[C]$.

Since $G$ does not induce $P_5$ in $G[W \cup \{a,d\}]$ for any vertices $a \in A$ and $d \in D$, no edges exist joining vertices in $A$ with vertices in $D$. Likewise, since $G$ does not induce the kite graph in $G[W \cup \{b,d\}]$ for any vertices $b \in B$ and $d \in D$, all edges possible between $B$ and $D$ must exist in $G$. Now any vertex in $B$ has degree at least $|B|+|C|+|D|-1$, while any vertex of $C$ has degree at most $|B|+|C|+|D|-2$. Since $v$ belongs to $C$ and is a vertex of maximum degree in $G$, we see that $B = \emptyset$.

Let $u$ be the other vertex in the partite set of $G[C]$ that contains $v$. Every maximum independent set in $G$ must contain a vertex from $D$ that is adjacent to $u$; otherwise, we could replace $v$ by $u$ in the set and obtain a maximum independent set in $G$ that does not contain $v$, a contradiction. Thus $D$ is nonempty, and we conclude that $G[C]$ is isomorphic to $C_4$, since if a vertex $d$ of $D$ were adjacent to exactly one vertex of each of three different partite sets in $G[C]$, then the induced subgraph with vertex set consisting of $d$, one of the partite sets, and the non-neighbors of $d$ in the other two partite sets is isomorphic to the kite, a contradiction.

Partition $D$ into two sets $D_u$, $D_v$ according to which of $u$ and $v$ each vertex is adjacent to. As observed above, $D_u$ is nonempty. Since $v$ has maximum degree in $G$, we see that $|D_v|\geq |D_u| \geq 1$. Let $x \in D_u$ and $y \in D_v$.

Now let $\{s,t\}$ be the partite set different from $\{u,v\}$ in $G[C]$. Each of $x$ and $y$ is adjacent to exactly one of $s$ and $t$. Without loss of generality, assume that $x$ is adjacent to $s$.

If $y$ is also adjacent to $s$, then since $G[\{t,u,v,x,y\}]$ is not isomorphic to $P_5$, vertices $x$ and $y$ must be adjacent. However, then $G[\{s,t,u,x,y\}]$ is isomorphic to the kite graph, a contradiction. Thus $y$ is adjacent to $t$. By similar arguments, all vertices of $D_u$ are adjacent to $s$, and all vertices of $D_v$ are adjacent to $t$.

Each vertex in $D_u$ must be adjacent to every vertex of $D_v$, since $G$ is $\codomino$-free. Furthermore, $D$ contains no vertex other than $x$ or $y$, since a third vertex, together with $x$, $y$, $u$, and $v$, would form the vertex set of an induced 4-pan or kite. Thus $G[C \cup D]$ is isomorphic to the graph on the right in Figure~\ref{fig: dumbbell}, which contradicts the claim that $v$ belongs to every maximum independent set in $G$, since a maximum independent in $G$ will include precisely two vertices from $G[C \cup D]$, and any two nonadjacent vertices will do, as there are no edges between $A$ and $D$. We conclude that $v$ cannot belong to every maximum independent set, and in fact $R(G)=R(G-v)=\alpha(G-v)=\alpha(G)$, which completes our induction.


\begin{thebibliography}{99}

\bibitem{ChartrandLesniakZhang11} G.~Chartrand, L.~Lesniak, and P.~Zhang, Graphs \& digraphs (fifth ed.), CRC Press, Boca Raton, FL, 2011.

\bibitem{FavaronEtAl91} O.~Favaron, M.~Mah\'{e}o, and J.-F.~Sacl\'{e}, On the residue of a graph, J.~Graph Theory 15 (1) (1991) 39--64.

\bibitem{FoldesHammer78} S.~F\"{o}ldes and P.L.~Hammer, On a class of matroid-producing graphs, Combinatorics (Proc.~Fifth Hungarian Colloq., Keszthely, 1976), Vol.~I, pp.~331--352, 
Colloq.~Math.~Soc.~J\'{a}nos Bolyai, 18, North-Holland, Amsterdam-New York, 1978. 

\bibitem{GriggsKleitman90} J.~Griggs and D.J.~Kleitman, Independence and the Havel-Hakimi residue, Graph theory and applications (Hakone, 1990), Discrete Math.~127 (1994), no.~1--3, 209--212.

\bibitem{Havel55} V.~Havel, A remark on the existence of finite graphs (Hungarian), \u{C}asopis P\u{e}st Mat.~80 (1955), 477--480.

\bibitem{Hakimi62} S.~Hakimi, On the realizability of a set of integers as degree sequences of
the vertices of a graph, J.~SIAM Appl.~Math.~10 (1962), 496--506.

\bibitem{MahadevPeled95} N.V.R.~Mahadev and U.N.~Peled, Threshold graphs and related topics, Annals of Discrete Mathematics, 56. North-Holland Publishing Co., Amsterdam, 1995.

\bibitem{Triesch96} E.~Triesch, Degree sequences of graphs and dominance order, J.~Graph Theory 22 (1996), no.~1, 89--93.
\end{thebibliography}
\end{document}